\theoremstyle{theorem}
\newtheorem{theorem}{Theorem}
\newtheorem{proposition}[theorem]{Proposition}
\theoremstyle{definition}
\newtheorem{remark}[theorem]{Remark}
\providecommand{\Prob}[1]{\mathbb{P}\left\{#1\right\}}
\providecommand{\card}[1]{\texttt{\#}#1}
\newcommand{\Ex}{\mathbb{E}}
\begin{document}

\title{L\'evy's second arcsine law\\ via the ballot theorem}
\markright{L\'evy's second arcsine law via the ballot theorem}
\author{Helmut H.~Pitters\thanks{Mathematics Institute, University of Mannheim, Germany. E-mail:~\nolinkurl{helmut[dot]pitters[at]gmail[dot]com}}}

\maketitle

\begin{abstract}
We provide a new and elementary proof of L\'evy's second arcsine law for Brownian motion. The only tools required are basic properties of Brownian motion and Poisson processes, and the ballot theorem. Our proof is readily extended to Brownian motion with drift.
\end{abstract}

\section*{Introduction.}
Let $(B_t)_{t\geq 0}$ denote standard linear Brownian motion started in $B_0=0$ almost surely, and for fixed $t>0$ consider the proportion of time
\begin{align}\label{def:occupation_time}
  t^{-1}\int_0^t \mathbf 1_{(0, \infty)}(B_t) dt,
\end{align}
that Brownian motion spends above zero during $[0, t]$. Here $\mathbf 1_A(x)$ is the indicator of some set $A$ that equals one if $x\in A$ and zero otherwise. In his celebrated result from 1939, Paul L\'evy~\cite{Levy1939} characterised the distribution of~\eqref{def:occupation_time}. This result is sometimes referred to as the (L\'evy's) second arcsine law for Brownian motion, cf.~\cite[Theorem 5.28]{MoertersPeres2010}, and reads as follows.

\begin{theorem}[L\'evy's second arcsine law for Brownian motion]\label{thm:arcsine_law}
For any fixed $t>0$ the occupation time~\eqref{def:occupation_time} of Brownian motion has an arcsine distribution with support $(0, 1)$ and density
\begin{align}\label{eq:arcsine_density}
  x\mapsto \frac{1}{\pi\sqrt{x(1-x)}}\mathbf 1_{(0,1)}(x).
\end{align}
\end{theorem}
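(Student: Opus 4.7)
My plan is to reduce the theorem to a discrete statement about a simple symmetric random walk, where the ballot theorem provides the crucial combinatorial input, and then to transfer the result to Brownian motion via Donsker's invariance principle. Fixing $t = 1$ by Brownian scaling, I let $S_0 = 0, S_1, \ldots, S_{2n}$ denote a simple symmetric random walk and set
\[
  A_{2n} = \#\bigl\{1 \leq k \leq 2n : S_{k-1} \vee S_k > 0\bigr\},
\]
the natural discrete analogue of the occupation-time functional in~\eqref{def:occupation_time}.

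The core combinatorial step is to identify the law of $A_{2n}$ via the ballot theorem, which states that among lattice paths from $(0, 0)$ to $(k, j)$ with $j > 0$ the fraction remaining strictly positive at every time $1, 2, \ldots, k$ equals $j/k$. Decomposing a walk of length $2n$ at its last visit to zero, applying the ballot theorem on each resulting excursion, and using the standard identity $\mathbb P(S_{2j} \neq 0 \text{ for all } 0 < i \leq 2j) = \binom{2j}{j} 2^{-2j}$, one recovers the Chung--Feller identity
\[
  \mathbb P(A_{2n} = 2k) = \binom{2k}{k}\binom{2n-2k}{n-k}\,2^{-2n}, \qquad k = 0, 1, \ldots, n.
\]
Stirling's formula gives $\binom{2j}{j}2^{-2j} \sim 1/\sqrt{\pi j}$, so, with $k = \lfloor nx \rfloor$, the quantity $n\,\mathbb P(A_{2n} = 2k)$ converges pointwise on $(0, 1)$ to $1/(\pi\sqrt{x(1 - x)})$. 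A short tightness argument (the binomial weights on $\{0, 1/n, \ldots, 1\}$ are uniformly summable away from the endpoints, and the endpoint contributions vanish as $n\to\infty$) upgrades this to convergence in distribution of $A_{2n}/(2n)$ to the arcsine law with density~\eqref{eq:arcsine_density}.

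To transfer to Brownian motion, I invoke Donsker's theorem: the rescaled walk $u \mapsto S_{\lfloor 2nu \rfloor}/\sqrt{2n}$ converges in law to $(B_u)_{0 \leq u \leq 1}$ in $C[0, 1]$. Because Brownian motion spends Lebesgue-null time at the level zero almost surely, the occupation-time functional $f \mapsto \int_0^1 \mathbf 1_{(0, \infty)}(f(u))\,du$ is continuous on a set of Wiener measure one, and the continuous mapping theorem identifies the law of~\eqref{def:occupation_time} with the weak limit of the law of $A_{2n}/(2n)$. Combined with the previous paragraph, this yields the arcsine density claimed in~\eqref{eq:arcsine_density}.

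The step I expect to be hardest is the transfer from discrete to continuum: one must argue that the discrete count $A_{2n}$ differs from $\int_0^1 \mathbf 1_{(0,\infty)}(B^{(n)}_u)\,du$ for the piecewise-constant (or piecewise-linear) interpolation $B^{(n)}$ by a negligible boundary error, and that the occupation-time functional is indeed continuous $\mathbb P$-a.s.\ at Brownian paths. A secondary subtlety lies in the careful bookkeeping in deriving the Chung--Feller identity from the ballot theorem, where strict positivity versus non-negativity issues must be tracked to avoid off-by-one errors in the count $A_{2n}$.
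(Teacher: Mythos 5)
Your proposal is sound in outline, but it is a genuinely different proof from the one in the paper --- in fact it is essentially the classical route via the discrete arcsine law and Donsker's theorem that the introduction already cites as a known alternative (the proof of Theorem 5.28 in M\"orters--Peres). You discretize first: the ballot theorem (via the identity $\Prob{S_i\neq 0,\ 0<i\leq 2j}=\binom{2j}{j}2^{-2j}$ and a decomposition of the path) yields the discrete arcsine law $\Prob{A_{2n}=2k}=\binom{2k}{k}\binom{2n-2k}{n-k}2^{-2n}$, Stirling's formula gives the limiting density, and Donsker's theorem plus an a.s.\ continuity argument for the occupation functional transfers the result to Brownian motion. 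The paper instead never leaves the continuum and never invokes a functional limit theorem: it computes the $m$th moment of the occupation time as the persistence probability $\Prob{B_{U_1}>0,\ldots,B_{U_m}>0}$ for i.i.d.\ uniform sampling times, rewrites this via the order-statistics representation as sampling at the arrival times of a rate-one Poisson process, observes that the resulting walk has Laplace increments $(T'_1-T''_1)/\sqrt 2$ built from two independent Poisson processes, and then applies the \emph{weak} ballot theorem to the sequence of marks of the merged Poisson process to evaluate the persistence probability as $2^{-2m}\binom{2m}{m}$, which is matched against the moments of the arcsine density. What each approach buys: yours gives the density directly and needs no moment-determinacy argument, but it rests on Donsker's invariance principle, a tightness/Stirling analysis, and the somewhat delicate continuity and boundary-error arguments you yourself flag as the hard part; the paper's argument is a finite, exact computation for each fixed $m$ using only basic properties of Brownian motion and Poisson processes, and it adapts immediately to Brownian motion with drift (as in the final remark), where the random-walk discretization would require a less standard invariance argument. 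Both proofs use the ballot theorem, but in different places: you apply it to the increments of the simple random walk itself, whereas the paper applies it to the Rademacher marks $Y_k$ of a marked Poisson process. If your goal is to reproduce the paper's contribution rather than to reprove the theorem, you should be aware that your route is precisely the one the paper sets out to avoid.
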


 In his proof L\'evy employed the inverse of the continuous local time process of Brownian motion and argued with the fact that this process is a stable subordinator with index $1/2,$ cf.~\cite{PitmanYor1992}. Since then various proofs of Theorem~\ref{thm:arcsine_law} have been offered. Another proof may be obtained by viewing Brownian motion as the diffusion limit of suitably chosen random walks as justified by Monroe Donsker's invariance principle, cf.~\cite[Proof of Theorem 5.28, p.~139]{MoertersPeres2010}. A more direct derivation is obtained from the Feynman-Kac formula, cf.~\cite[first application of Theorem 7.43]{MoertersPeres2010}. For proofs of Theorem~\ref{thm:arcsine_law} mainly using tools from excursion theory, see e.g.,~\cite[Chapter 4]{YenYor2013}.

In this note we offer a new and elementary proof of Theorem~\ref{thm:arcsine_law} that features, somewhat surprisingly, the (first) ballot theorem, sometimes referred to as Joseph Bertrand's ballot theorem. The proof only requires familiarity with basic properties of Brownian motion (cf.~\cite{MoertersPeres2010}), the Poisson process (cf.~\cite{Kingman1993}), and uniform order statistics (cf.~\cite[Section 4.1]{Pyke1965}).
\smallskip

Let us recall the ballot theorem, more specifically, the so-called first ballot theorem due to Betrand, cf.~\cite{Addario-BerryReed2008}.

\begin{theorem}[Bertrand~\cite{Bertrand1888}]\label{thm:ballot}
  Suppose that two candidates have been submitted to a vote. Candidate A obtains $a$ votes; candidate B obtains $b$ votes. Candidate A is elected, i.e.~$a>b.$ Then the probability that during the counting of the votes, the number of votes for A is at all times greater than the number of votes for B is $(a-b)/(a+b)$.
\end{theorem}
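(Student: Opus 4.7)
My proof plan would be to encode the counting process as a lattice path $(S_k)_{0 \le k \le a+b}$ with $S_0 = 0$ and $S_{a+b} = a - b$, where an A-vote contributes a $+1$ step and a B-vote a $-1$ step. Under the uniform law on the $\binom{a+b}{a}$ such paths, the event ``A leads throughout the count'' becomes $\{S_k > 0 \text{ for all } 1 \le k \le a+b\}$, and it suffices to count these ``good'' paths.

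First I would note that any good path must have $S_1 = 1$, and there are exactly $\binom{a+b-1}{a-1}$ paths starting with a $+1$ step. From these I need to subtract the ``bad'' ones that begin with $+1$ but later return to zero. For this the plan is to invoke the reflection principle: letting $\tau \ge 2$ be the first index with $S_\tau = 0$, I reflect $(S_0, S_1, \ldots, S_\tau)$ across the horizontal axis. The resulting path has $S_1 = -1$ and still ends at $a - b$. This defines a bijection between bad paths and all paths with $S_1 = -1$, of which there are $\binom{a+b-1}{a}$. Consequently the number of good paths equals $\binom{a+b-1}{a-1} - \binom{a+b-1}{a}$, and after dividing by $\binom{a+b}{a}$ a short factorial calculation yields $(a-b)/(a+b)$.

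The main obstacle is verifying that the reflection map is a genuine bijection. The forward map is well defined because every bad path hits $0$ by assumption; for the inverse, I would observe that any path with $S_1 = -1$ must cross $0$ on its way up to $a - b > 0$, so an analogous first-hit time $\tau$ exists and the reflection can be undone. As a fallback, should this bijective argument seem heavy, an equally elementary route is induction on $a + b$: conditioning on the identity of the last vote yields the recursion
\[
  P(a, b) = \frac{a}{a+b} P(a-1, b) + \frac{b}{a+b} P(a, b-1),
\]
with base cases $P(a, 0) = 1$ and $P(a, a) = 0$, after which a one-line algebraic identity closes the induction.
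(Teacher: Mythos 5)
Your proof is correct. Note that the paper states Theorem~\ref{thm:ballot} without proof, citing Bertrand and the surveys of Renault and Addario-Berry--Reed, so there is no in-paper argument to compare against; your reflection-principle derivation is the standard self-contained way to supply one. Both of your routes check out. For the reflection: the map is a genuine bijection for exactly the reason you give --- a path with $S_1=-1$ and $S_{a+b}=a-b>0$ must pass through $0$ at some time $\tau\geq 2$ by discreteness, reflecting the initial segment preserves the numbers of up- and down-steps (there are $\tau/2$ of each before time $\tau$), and the first hitting time of $0$ is invariant under the reflection, so the two maps invert each other. The resulting count $\binom{a+b-1}{a-1}-\binom{a+b-1}{a}=\frac{(a+b-1)!\,(a-b)}{a!\,b!}=\frac{a-b}{a+b}\binom{a+b}{a}$ gives the claim. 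Your fallback recursion $P(a,b)=\frac{a}{a+b}P(a-1,b)+\frac{b}{a+b}P(a,b-1)$ is also valid (conditioning on the identity of the last ballot is legitimate since it is uniform over the $a+b$ votes, and the terminal value $S_{a+b}=a-b>0$ is automatic), with the algebra $\frac{a(a-b-1)+b(a-b+1)}{(a+b)(a+b-1)}=\frac{a-b}{a+b}$ closing the induction. One small remark: what the paper actually invokes downstream is the weak (ties-allowed) version with probability $(a+1-b)/(a+1)$; this follows from the strict statement you proved by prepending a single extra A-vote, which bijects never-behind sequences of $a$ A's and $b$ B's with always-strictly-ahead sequences of $a+1$ A's and $b$ B's, so your argument covers the paper's needs with one additional line.
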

 Moreover, in the weak version of the ballot theorem ties are allowed, i.e.,~one only asks that during the counting of the votes the number of votes for A is at all times at least the number of votes for B. This probability is $(a+1-b)/(a+1)$, cf.~\cite{Renault2007}. For more information on the background of the ballot theorem and its various proofs the reader is referred to~\cite{Renault2007}, and to~\cite{Addario-BerryReed2008} for a probabilistic perspective. In our following proof of Theorem~\ref{thm:arcsine_law} we will in fact employ the weak version of the ballot theorem.

\section*{New proof of L\'evy's second arcsine law for Brownian motion.}
In order to prove the second arcsine law for Brownian motion we may assume $t=1$ without loss of generality due to the scaling property of $(B_t)$. Suppose that we were to `guess' the amount of time that $(B_t)$ spends above zero during $[0, 1]$, and to this end we were allowed to sample $(B_t)$ at $m$ instances chosen according to our liking. It seems natural to choose the times $U_1, \ldots, U_m$ independently (and independently of $(B_t)$) and uniformly distributed in $(0, 1),$ and to estimate said proportion by $\card\{1\leq k\leq m\colon B_{U_k}>0  \}/m$. In fact, it turns out that the probability of $\{ B_{U_1}>0, \ldots, B_{U_m}>0 \}$ agrees with the $m$th moment of the occupation time in~\eqref{def:occupation_time}, which is uniquely determined by its moments.

\begin{proposition}[Sampling the occupation time]\label{prop:sampling}
 Let $(U_k)_{k\geq 1}$ denote an i.i.d.~sequence of uniform $(0, 1)$ r.v.s that is independent of the Brownian motion $(B_t).$ Then, for any natural number $m$,
  \begin{align}
    \Ex \left [\left (\int_0^1 \mathbf 1_{(0, \infty)}(B_t)  dt \right )^m\right ] = \Prob{B_{U_1}>0, \ldots, B_{U_m}>0}  .
  \end{align}
\end{proposition}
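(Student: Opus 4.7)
The plan is to expand the $m$th power of the integral as an iterated integral over the cube $(0,1)^m$, interchange the order of integration and expectation, and then recognise the result as an expectation over $m$ independent uniform samples. More precisely, by Fubini's theorem (applied to the non-negative integrand, noting that the occupation time is bounded by $1$ so all moments exist) one has
\begin{align*}
  \left(\int_0^1 \mathbf{1}_{(0,\infty)}(B_t)\,dt\right)^m
  &= \int_{(0,1)^m} \prod_{k=1}^m \mathbf{1}_{(0,\infty)}(B_{t_k})\,dt_1\cdots dt_m.
\end{align*}

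Next, I would take expectations on both sides and use Tonelli to push $\mathbb{E}$ inside the deterministic Lebesgue integral, obtaining
\begin{align*}
  \Ex\!\left[\left(\int_0^1 \mathbf{1}_{(0,\infty)}(B_t)\,dt\right)^m\right]
  = \int_{(0,1)^m} \Prob{B_{t_1}>0,\ldots,B_{t_m}>0}\,dt_1\cdots dt_m.
\end{align*}

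Finally, I would reinterpret the right-hand side probabilistically. Since $(U_1,\ldots,U_m)$ is uniformly distributed on $(0,1)^m$ and is independent of $(B_t)$, conditioning on the values of the $U_k$ and applying the tower property yields
\begin{align*}
  \int_{(0,1)^m}\Prob{B_{t_1}>0,\ldots,B_{t_m}>0}\,dt_1\cdots dt_m
  = \Ex\!\left[\Prob{B_{U_1}>0,\ldots,B_{U_m}>0\mid U_1,\ldots,U_m}\right],
\end{align*}
and the outer expectation collapses to $\Prob{B_{U_1}>0,\ldots,B_{U_m}>0}$, which is the claimed identity.

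There is really no serious obstacle here: the argument is essentially Fubini combined with the independence of the sample points from the Brownian motion. The only points requiring (minor) care are checking the hypothesis for Fubini/Tonelli, which is immediate because $0\leq \mathbf{1}_{(0,\infty)}(B_t)\leq 1$ so the integrand is bounded and non-negative, and the measurability of the map $(\omega,t)\mapsto \mathbf{1}_{(0,\infty)}(B_t(\omega))$, which follows from the continuity of sample paths of $(B_t)$.
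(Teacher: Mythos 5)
Your proof is correct and is essentially the paper's own argument run in the opposite direction: the paper starts from $\Ex[f(B_{U_1})\cdots f(B_{U_m})]$, integrates out the uniforms by Fubini, and observes that the resulting iterated integral factors into the $m$th power, whereas you expand the power first and then reassemble the uniforms. Both rest on the same Fubini/Tonelli step and the independence of $(U_k)$ from $(B_t)$, so there is nothing to add.
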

\begin{proof}
      Let $f$ denote some real-valued measurable function. Then
      $$
      \Ex\big[f(B_{U_1})\cdots f(B_{U_m})\big] =  \Ex\left[ \int_0^1\cdots \int_0^1 f(B_{u_1}) \cdots f(B_{u_m}) d u_1 \cdots d u_m \right].
      $$
      Notice that the integrals on the right hand side are identical. Setting  $f(x):=\mathbf 1_{(0, \infty)}(x)$ shows the claim.
\end{proof}

Proposition~\ref{prop:sampling} is a special case of a more general result on the occupation time of some stochastic process that was given in~\cite[Proposition 1]{AurzadaDoeringPitters2024}. Since $\int_0^1 \mathbf 1_{\{ B_t>0 \}}  dt$ is bounded, its distribution is uniquely determined by its moment sequence. Thus we are left to compute the persistence probability
\begin{align}\label{eq:persistence}
  \Prob{B_{U_1}>0, ..., B_{U_m}>0}.
\end{align}
We now work out~\eqref{eq:persistence} by revealing a deeper connection between $(B_{U_{k}})_{k=1}^m$ and two Poisson processes in two steps.


First step. In this first step, we replace the $(U_k)$ in~\eqref{eq:persistence} by the arrival times $0=T'_0<T'_1<\cdots$ in a Poisson process $\Pi'$ of unit intensity on the positive half-line independent of $(B_t)$. To this end, we first need to recall the notion of order statistics together with a basic property of uniform order statistics. Let $x_{(1)}, \ldots, x_{(m)}$ denote the values in increasing order of some fixed but arbitrary real numbers $x_1, \ldots, x_m$. More precisely, we require that $\{ x_{(k)}\colon 1\leq k\leq m  \} = \{ x_k\colon 1\leq k\leq m  \},$ and that $x_{(1)}\leq \cdots\leq x_{(m)}.$ We call the $x_{(1)}, \ldots, x_{(m)}$ the order statistics of $x_1, \ldots, x_m$. By a standard result on order statistics (and their spacings), the order statistics $(U_{(1)}, \ldots, U_{(m)})$ of i.i.d.~uniform $(0, 1)$ r.v.s may also be represented (in distribution) as $(T'_1, \ldots, T'_m)/T'_{m+1},$ and the latter random vector is independent of $T'_{m+1},$ cf.~\cite[Sections 4.1--4.3]{Pyke1965}. Moreover, by the scaling property of Brownian motion the processes $(B(Tt)/\sqrt T)_{t\geq 0}$ and $(B_t)_{t\geq 0}$ are equal in distribution for any fixed $T>0.$ Consequently, this identity in law still holds if $T$ is a positive r.v.~almost surely, and independent of $(B_t)$. In particular, replacing $T$ by $T_{m+1},$ and sampling at times $U_{(1)}, \ldots, U_{(m)},$ we obtain
\begin{align*}
  (B_{U_{(k)}})_{k=1}^m =_d (T'_{m+1})^{-\frac 1 2}(B_{U_{(k)}T'_{m+1}})_{k=1}^m =_d (T'_{m+1})^{-\frac 1 2}(B_{T'_{k}})_{k=1}^m,
\end{align*}
and therefore $\Prob{B_{U_1}>0, ..., B_{U_m}>0}=\Prob{B_{T'_1}>0, ..., B_{T'_m}>0}.$

Second step. In the second and final step we show how $(B_{T_k})_{k=1}^m$ may be expressed purely in terms of Poisson processes. As announced earlier, we now bring into play two more Poisson processes $\Pi, \Pi''$, and we'll redefine $\Pi'$ in terms of $\Pi$ though this does not affect the distribution of $\Pi'$. Let $\Pi=\{ (T_k, Y_k)\colon k\geq 1  \}$ denote a marked Poisson process on $[0, \infty)\times \{0, 1\}$ independent of $(B_t).$ We determine the distribution of $\Pi$ by requiring that its restriction $\{ T_k\colon k\geq 1 \}$ to the line is a Poisson process of intensity two independent of the i.i.d.~sequence of marks $(Y_k)_{k\geq 1},$ where $Y_1$ is Rademacher distributed. By the Marking Theorem for Poisson processes, cf.~\cite{Kingman1993},
\begin{align}
  \Pi'\coloneqq \{  T_k\colon Y_k=1 \}, \qquad  \Pi''\coloneqq \{  T_k\colon Y_k=-1\}
\end{align}
are two independent standard Poisson processes on the line whose points we denote by $(T'_k)$ and $(T''_k)$, respectively.

The process $(R_k)_{k\geq 1}$ defined by $R_k\coloneqq B_{T'_k}$ is a (continuous-space) random walk with i.i.d.~increments (as a short calculation with the joint characteristic function or, alternatively, Bochner's subordination theorem reveals). Each of the increments $(R_{k}-R_{k-1})_{k\geq 1}=(B_{T'_k}-B_{T'_{k-1}})_{k\geq 1}$ (where $R_0\coloneqq T'_0\coloneqq 0$) is distributed as
\begin{align}\label{eq:laplace_representation}
  B_{T'_1} =_d \sqrt{T'_1}B_1 =_d \frac{1}{\sqrt 2}(T'_1-T''_1),
\end{align}
where the first identity in distribution follows from the scaling property of Brownian motion, the second identity in distribution may easily be seen by computing the characteristic functions of both sides (cf.~\cite[equations (2.2.3) and (2.2.8)]{KotzKozubowskiPodgorski2001}). The law of $\sqrt 2 B_{T'_1}$ is also known as the standard Laplace distribution, cf.~\cite[Proposition 2.2.1]{KotzKozubowskiPodgorski2001}. In summary, we have shown that
$$
  \Prob{B_{U_1}>0, \ldots, B_{U_m}>0} = \Prob{T'_1>T''_1, \ldots, T'_m>T''_m }.
$$
\begin{remark}
  The event $\{ T'_1>T''_1, \ldots, T'_m>T''_m  \} = \{ R_1>0, \ldots, R_m>0  \}$ is nothing but the random walk $(R_k)$ surviving during its first $m$ steps. Since $(R_k)$ has i.i.d.~increments which are continuous and symmetric, this survival probability is given by $2^{-2m}\binom{2m}{m}$ according to Sparre Andersen's theorem on the fluctuations of random walks. However, we do not draw on this deep result of Andersen's, but instead continue with our elementary proof.
\end{remark}
Picture now the points in $\Pi$ as colored balls discovered in their order of appearance, with the $k$th ball black if $Y_k=1$ and white if $Y_k=-1$. Set $S_k \coloneqq Y_1+\cdots +Y_k,$ and let
\begin{align*}
  \tau_m\coloneqq \inf\left \{ k\geq m \colon \sum_{j=1}^k \mathbf 1_{\{Y_j=1  \}}  =m  \right \} 
\end{align*}
denote the first time when there are $m$ black balls drawn. Fix $w\geq 0.$ If $\tau_m=m+w$ then there are $w$ white and $m$ black balls among the first $m+w$ balls drawn with the last ball drawn black. Consequently,
\begin{align}
  \Prob{\tau_m=m+w} = \left (\frac 1 2\right )^{m+w}\binom{m+w-1}{m-1},\qquad w\geq 0,
\end{align}
and the distribution of $\tau_m-m$ is sometimes referred to as a negative binomial distribution. 	Given that $\tau_m=m+w$ the probability of  $S_1\geq 0, ..., S_{\tau_m-1}\geq 0$  is given by the ballot theorem with ties as $(m-w)/m,$ as expounded in Theorem~\ref{thm:ballot} and the subsequent comment. Thus
\begin{align*}
  & \Prob{ T'_1>T''_1, \ldots, T'_m>T''_m } \\
  &=  \Prob{ S_1\geq 0, ..., S_{\tau_m}\geq 0 }\\
  &= \sum_{w=0}^{m-1} \Prob{S_1\geq 0, ..., S_{m-1+w}\geq 0 \mid \tau_m=m+w}\Prob{\tau_m=m+w} \\
  &= 2^{-m}\sum_{w=0}^{m-1}  \frac{m-w}{m}2^{-w} \binom{m-1+w}{m-1},
  \intertext{and, since for $w\geq 1$ we have $\frac{m-w}{m}\binom{m+w-1}{m-1} = \frac{m+w-2w}{m}\frac{(m+w-1)!}{(m-1)!w!}=\binom{m+w}{m}-2\binom{m+w-1}{m},$}
  &= 2^{-m} \left (  1 + \sum_{w=1}^{m-1} \left [ 2^{-w}\binom{m+w}{m} -2^{-w+1}\binom{m+w-1}{m}\right ]     \right )\\
  &= 2^{-m} \left (  1 + \sum_{w=1}^{m-1}  2^{-w}\binom{m+w}{m} -\sum_{w=0}^{m-2} 2^{-w}\binom{m+w}{m}     \right )\\
  &= 2^{-2m+1}\binom{2m-1}{m-1}=2^{-2m}\binom{2m}{m}.
\end{align*}
We have just shown that the occupation time defined in~\eqref{def:occupation_time} has moment sequence $(2^{-2m}\binom{2m}{m})_{m\geq 1}.$ On the other hand, recall the beta integral $\int_0^1 x^{a-1}(1-x)^{b-1}dx=\Gamma(a)\Gamma(b)/\Gamma(a+b)$ for any $a, b>0,$ with the Gamma function defined by $\Gamma(x)\coloneqq\int_0^\infty e^{-t}t^{x-1}dt$ for $x>0,$ so the $m$th moment of the arcsine distribution with density~\eqref{eq:arcsine_density} is given by
\begin{align*}
  \pi^{-1}\int_0^1 x^{m-\frac 1 2}(1-x)^{-\frac 1 2}dx = \frac{\Gamma(m+\frac 1 2)\Gamma(\frac 1 2)}{\pi\Gamma(m+1)}=2^{-2m}\binom{2m}{m},
\end{align*}
where in the second equality we used $\Gamma(m+\frac 1 2)=\sqrt \pi(2m-1)!!/2^{m},$ $m\in\mathbb N,$ cf.~\cite[2.~in 8.339]{GradshteynRyzhik2007}, and $(2m-1)!!=1\cdot 3\cdots (2m-1)=(2m-1)!/(2\cdot 4\cdots (2m-2))=(2m-1)!/(2^{m-1}(m-1)!)$.
Since the distribution of a bounded real random variable is uniquely determined by its moment sequence, we conclude the proof.

\begin{remark}
  We have seen that the sampling method provides an elegant proof of the arcsine law. The sampling method is also flexible in that it works just as well in higher dimensions, and there leads to computations involving coupled Poisson processes. We believe that this approach may also be useful to study occupation times of Brownian motion in $\mathbb R^d,$ $d>1$, a topic with mostly open questions.
\end{remark}

\begin{remark}
  Fix a real number $\mu$ and let $B^{(\mu)}\coloneqq (B^{(\mu)}_t)_{t\geq 0}$ denote Brownian motion with drift $\mu$ defined by $B^{(\mu)}_t\coloneqq B_t+\mu t.$ Consider the time $\int_0^1 \mathbf 1_{(0, \infty)}( B^{(\mu)}_t)dt$ that this process spends above zero during $[0, 1].$ The law of the occupation time of $(B^{(\mu)}_t)$ has been studied in the context of option pricing in mathematical finance, e.g.,~\cite{Akahori1995, EmbrechtsRogersYor1995}. A formula for its distribution function may be found in~\cite[Theorem 1.1]{Akahori1995}, and a formula for its density is provided in~\cite[Equation (4a)]{EmbrechtsRogersYor1995}. Our derivation of the moments of the occupation time of $(B_t)$ may be readily adapted to this setting. The random walk obtained by sampling $B^{(\mu)}$ at the points $0<T'_1<T'_2<\cdots$ has i.i.d.~increments that are equal in distribution to $B^{(\mu)}_{T'_1}=_d (\kappa^{-1} T'_1-\kappa T''_1)/\sqrt 2$ with $\kappa\coloneqq (\sqrt{2+\mu^2}-\mu)/\sqrt 2$,  where the last equality in distribution is a well-known representation of the asymmetric Laplace distribution, cf.~\cite[equations (3.1.9) and (3.2.1)]{KotzKozubowskiPodgorski2001}. In complete analogy to the previous arguments, the time $\tau_m$ until we see the $m$th black ball has distribution $\Prob{\tau_m=m+w}=p^m(1-p)^w\binom{m+w-1}{m-1},$ $w\geq 0,$ where $p\coloneqq 1/(1+\kappa^2)$. For the $m$th moment of the sojourn time of $(B^{(\mu)}_t)$ we thus obtain
  \begin{align*}
    \Ex \left[ \left (\int_0^1 \mathbf 1_{\{ B^{(\mu)}_t>0\}} dt \right )^m \right ] &= p^m \sum_{w=0}^{m-1} (1-p)^w\frac{m-w}{m}\binom{m+w-1}{m-1}.
  \end{align*}
 In the case $\mu=0$ we recover the result of Theorem~\ref{thm:arcsine_law}.
\end{remark}

\textbf{Acknowledgements.} H.H.P.~thanks Leif D\"oring for stimulating discussions.
\newline

\textbf{Funding.} This work was supported by the DFG (German Research Foundation) under Grant 526069380.

\bibliographystyle{vancouver}
\bibliography{literature.bib}



\vfill\eject

\end{document}